\NewDocumentCommand{\eulerian}{omm}
 {%
  \genfrac<>{0pt}{}{#2}{#3}%
  \IfValueT{#1}{_{\!#1}}%
 }
\newcommand*\pFq[6][8]{%
  \begingroup 
  \pFqmuskip=#1mu\relax
  \mathchardef\normalcomma=\mathcode`,
  \mathcode`\,=\string"8000
  \begingroup\lccode`\~=`\,
  \lowercase{\endgroup\let~}\pFqcomma
  {}_{#2}F_{#3}{\left(\genfrac..{0pt}{}{#4}{#5}\bigg|#6\right)}%
  \endgroup
}
\newcommand{\pFqcomma}{{\normalcomma}\mskip\pFqmuskip}
\newtheorem{theorem}{Theorem}
\newtheorem{lemma}[theorem]{Lemma}
\begin{document}

\title[A new approach to fully degenerate Bernoulli numbers and polynomials]{A new Approach to fully degenerate Bernoulli numbers and polynomials}

\author{Taekyun  Kim}
\address{Department of Mathematics, Kwangwoon University, Seoul 139-701, Republic of Korea}
\email{tkkim@kw.ac.kr}

\author{DAE SAN KIM}
\address{Department of Mathematics, Sogang University, Seoul 121-742, Republic of Korea}
\email{dskim@sogang.ac.kr}

\subjclass[2010]{11B68; 11B73; 11B83}
\keywords{fully degenerate Bernoulli polynomials; degenerate Euler polynomials; degenerate $r$-Stirling numbers of the second kind}

\maketitle

\begin{abstract}
In this paper, we consider the doubly indexed sequence $a_{\lambda}^{(r)}(n,m),\,\,(n, m \ge 0)$, defined by a recurrence relation and an initial sequence $a_{\lambda}^{(r)}(0,m),\,\,(m \ge 0)$. We derive with the help of some differential operator an explicit expression for $a_{\lambda}^{(r)}(n,0)$, in term of the degenerate $r$-Stirling numbers of the second and the initial sequence. We observe that $a_{\lambda}^{(r)}(n,0)=\beta_{n,\lambda}(r)$, for $a_{\lambda}^{(r)}(0,m)=\frac{1}{m+1}$, and $a_{\lambda}^{(r)}(n,0)=\mathcal{E}_{n,\lambda}(r)$, for $a_{\lambda}^{(r)}(0,m)=\big(\frac{1}{2}\big)^{m}$. Here $\beta_{n,\lambda}(x)$ and $\mathcal{E}_{n,\lambda}(x)$ are the fully degenerate Bernoulli polynomials and the degenerate Euler polynomials, respectively.
\end{abstract}

\section{Introduction}
In recent years, we have witnessed that some mathematicians have explored various degenerate versions of many special polynomials and numbers by using various tools, which was initiated by Carlitz when he studied degenerate versions of some special polynomials and numbers, namely the degenerate Bernoulli and Euler polynomials and numbers (see [2]).  \par
The $r$-Stirling number of the second kind ${n \brace k}_{r}$ counts the number of partitions of the set $[n]=\left\{1,2,\dots,n \right\}$ into $k$ non-empty disjoint subsets in such a way that the numbers $1,2,\dots,r$ are in distinct subsets. The degenerate $r$-Stirling numbers of the second kind ${n \brace k}_{r,\lambda}$ are a degenerate version of the $r$-Stirling numbers of the second kind ${n \brace k}_{r}$. They can be viewed also as natural extensions of the degenerate Stirling numbers of the second kind ${n \brace k}_{\lambda}$, which were introduced earlier (see [11,12]). \par
The aim of this paper is to derive an explicit expression for the $n$-th generating function $g_{n}(t,\lambda)=\sum_{m=0}^{\infty}a_{\lambda}(n,m)t^{m}$, from the initial generating function $g_{0}(t,\lambda)=\sum_{m=0}^{\infty}a_{\lambda}(0,m)t^{m}$, and the recurrence relation given by \eqref{12}. If we choose $a_{\lambda}(0,k)=\frac{1}{k+1}$, then $a_{\lambda}(n,0)=\beta_{n,\lambda}$. Here $\beta_{n,\lambda}(x)$ are the fully degenrate Bernoulli polynomials and $\beta_{n,\lambda}=\beta_{n,\lambda}(0)$ are the fully degenerate Bernoulli numbers (see \eqref{10}). This is generalized to slightly more general recurrence relation in \eqref{29} with initial generating function $g_{0}^{(r)}(t,\lambda)=\sum_{m=0}^{\infty}a_{\lambda}^{(r)}(0,m)t^{m}$. Then we get $a_{\lambda}^{(r)}(n,0)=\beta_{n,\lambda}(r)$, for $a_{\lambda}^{(r)}(0,m)=\frac{1}{m+1}$, and $a_{\lambda}^{(r)}(n,0)=\mathcal{E}_{n,\lambda}(r)$, for $a_{\lambda}^{(r)}(0,m)=\big(\frac{1}{2}\big)^{m}$. Here $\mathcal{E}_{n,\lambda}(x)$ are the degenerate Euler polynomials. \par
In more detail, we show the following. For a given initial degenerate sequence $a_{\lambda}(0,m),\,\,(m=0,1,2,\dots)$, the doubly indexed degenerate sequence $a_{\lambda}(n,m),\,\,(n \ge 1,\,m=0,1,2,\dots)$ are defined by the recurrence relation in \eqref{12}. For each nonnegative integer $n$, let $g_{n}(t,\lambda)=\sum_{m=0}^{\infty}a_{\lambda}(n,m)t^{m}$ be the $n$-th generating function. Then we show that
\begin{align*}
g_{n}(t,\lambda)=\Big((t-1)\frac{d}{dt}\Big)_{n,\lambda}g_{0}(t,\lambda)=\sum_{k=0}^{n}S_{2,\lambda}(n,k)(t-1)^{k}\big(\frac{d}{dt}\big)^{k}g_{0}(t,\lambda),
\end{align*}
where $(x)_{n,\lambda}$ are the degenerate falling factorials, and $S_{2,\lambda}(n,k)$ are the degenerate Stirling numbers of the second kind (see \eqref{3}). Then, by letting $t=0$, we express $a_{\lambda}(n,0)$ in terms of the initial sequence $a_{\lambda}(0,k)$, namely $a_{\lambda}(n,0)=\sum_{k=0}^{n}S_{2,\lambda}(n,k)(-1)^{k}k!a_{\lambda}(0,k)$. For $a_{\lambda}(0,k)=\frac{1}{k+1}$, $a_{\lambda}(n,0)=\beta_{n,\lambda}$. This idea is generalized to the case of slightly more general recurrence relation in \eqref{29}, starting with the initial sequence $a_{\lambda}^{(r)}(0,m)$. Indeed, by proceeding similarly to the previous case we obtain the expression
\begin{align*}
a_{\lambda}^{(r)}(n,0)=\sum_{m=0}^{n}{n+r \brace m+r}_{\lambda}(-1)^{m}m!a_{\lambda}^{(r)}(0,m),
\end{align*}
where $r$ is a nonnegative integer. For $a_{\lambda}^{(r)}(0,m)=\frac{1}{m+1}$, $a_{\lambda}^{(r)}(n,0)=\beta_{n,\lambda}(r)$. In addition, we express the sum $\sum_{n=0}^{\infty}a_{\lambda}^{(r)}(n,0)\frac{t^n}{n!}$ in term of the initial generating function $F(t)=g_{0}^{(r)}(t,\lambda)$. Observe here that the sum is over the first argument. Indeed, we have
\begin{align*}
e_{\lambda}^{r}(t)F(1-e_{\lambda}(t))=\sum_{n=0}^{\infty}a_{\lambda}^{(r)}(n,0)\frac{t^n}{n!}.
\end{align*}
Here $a_{\lambda}^{(r)}(n,0)=\mathcal{E}_{n,\lambda}(r)$, for $a_{\lambda}^{(r)}(0,m)=\big(\frac{1}{2}\big)^{m}$.

For any $\lambda\in\mathbb{R}$, the degenerate exponential functions are defined by
\begin{equation}
e_{\lambda}^{x}(t)=(1+\lambda t)^{\frac{x}{\lambda}},\quad (\mathrm{see}\ [5,8-12]).\label{1}
\end{equation}
When $x=1$, we let $e_{\lambda}(t)=e_{\lambda}^{1}(t)$. Note that $\displaystyle \lim_{\lambda\rightarrow 0}e_{\lambda}^{x}(t)=e^{xt}\displaystyle$. \par
The degenerate falling factorials are defined by
\begin{displaymath}
(x)_{0,\lambda}=1,\quad (x)_{n,\lambda}=x(x-\lambda)(x-2\lambda)\cdots\big(x-(n-1)\lambda\big),\quad (n\ge 1).
\end{displaymath}
In [5], the degenerate Stirling numbers of the first kind are defined by
\begin{equation}
(x)_{n}=\sum_{k=0}^{n}S_{1,\lambda}(n,k)(x)_{k,\lambda},\quad (n\ge 0),\label{2}	
\end{equation}
where $(x)_{0}=1,\ (x)_{n}=x(x-1)\cdots(x-n+1),\ (n\ge 1)$. \par
As the inversion formula of \eqref{2}, the degenerate Stirling numbers of the second kind are defined by
\begin{equation}
(x)_{n,\lambda}=\sum_{k=0}^{n}S_{2,\lambda}(n,k)(x)_{k},\quad (n\ge 0),\quad (\mathrm{see}\ [5]).\label{3}
\end{equation}
Note that $\displaystyle\lim_{n\rightarrow \infty}S_{1,\lambda}(n,k)=S_{1}(n,k),\ \lim_{\lambda\rightarrow 0}S_{2,\lambda}(n,k)=S_{2}(n,k)\displaystyle$, where $S_{1}(n,k)$ and $S_{2}(n,k)$ are respectively the Stirling numbers of the first kind and the Stirling numbers of the second kind defined by
\begin{equation}
(x)_{n}=\sum_{k=0}^{n}S_{1}(n,k)x^{k},\quad x^{n}=\sum_{k=0}^{n}S_{2}(n,k)(x)_{k},\quad (n\ge 0),\quad (\mathrm{see}\ [9,13-17]).\label{4}
\end{equation}
For $r\in\mathbb{Z}$ with $r\ge 0$, the degenerate $r$-Stirling numbers of the second kind are defined by
\begin{equation}
(x+r)_{n,\lambda}=\sum_{k=0}^{n}{n+r \brace k+r}_{\lambda}(x)_{k},\quad (n\ge 0),\quad (\mathrm{see}\ [11,12]).\label{5}	
\end{equation}
From \eqref{5}, we note that
\begin{equation}
\frac{1}{k!}e_{\lambda}^{r}(t)\big(e_{\lambda}(t)-1\big)^{k}=\sum_{n=k}^{\infty}{n+r \brace k+r}_{\lambda}\frac{t^{n}}{n!},\quad (\mathrm{see}\ [11,12]). \label{6}
\end{equation}
By \eqref{5} and \eqref{6}, we easily get
\begin{equation}
{n+r \brace k+r}_{\lambda}=\sum_{m=0}^{n}\binom{n}{m}S_{2,\lambda}(m,k)(r)_{n-m,\lambda}\label{7}.
\end{equation}
For $r=0$, we note that ${n \brace k}_{\lambda}=S_{2,\lambda}(n,k)$. \\
In [2], Carlitz introduced the degenerate Bernoulli polynomials defined by
\begin{equation}
\frac{t}{e_{\lambda}(t)-1}e_{\lambda}^{x}(t)=\sum_{n=0}^{\infty}B_{n}(x|\lambda)\frac{t^{n}}{n!}.\label{8}
\end{equation}
Note that $\displaystyle\lim_{\lambda\rightarrow 0} B_{n}(x|\lambda) =B_{n}(x)\displaystyle$, where $B_{n}(x)$ are the Bernoulli polynomials defined by
\begin{displaymath}
\frac{t}{e^{t}-1}e^{x t}=\sum_{n=0}^{\infty}B_{n}(x)\frac{t^{n}}{n!},\quad (\mathrm{see}\ [1,3,14]).
\end{displaymath}
He also defined the degenerate Euler polynomials given by
\begin{equation}
\frac{2}{e_{\lambda}(t)+1}e_{\lambda}^{x}(t)=\sum_{n=0}^{\infty}	\mathcal{E}_{n,\lambda}(x)\frac{t^{n}}{n!},\quad (\mathrm{see}\ [2]). \label{9}
\end{equation}
When $x=0$, $\mathcal{E}_{n,\lambda}=\mathcal{E}_{n,\lambda}(0)$ are called the degenerate Euler numbers. \par
The fully degenerate Bernoulli polynomials arise from a $p$-adic bosonic integral over $\mathbb{Z}_{p}$ and are given by
\begin{equation}
\frac{\log(1+\lambda t)}{\lambda(e_{\lambda}(t)-1)}e_{\lambda}^{x}(t)=\sum_{n=0}^{\infty}\beta_{n,\lambda}(x)\frac{t^{n}}{n!},\quad (\mathrm{see}\ [9]).\label{10}
\end{equation}
When $x=0$, $\beta_{n,\lambda}=\beta_{n,\lambda}(0)$ are called the fully degenerate Bernoulli numbers. \par
For $n\ge 0$, we note that
\begin{equation}
\beta_{n,\lambda}(r)=\sum_{k=0}^{n}{n+r \brace k+r}_{\lambda}(-1)^{k}\frac{k!}{k+1},\quad (r\ge 0),\quad (\mathrm{see}\ [9]). \label{11}
\end{equation}

\section{A new approach to fully degenerate Bernoulli numbers and polynomials}
In this section, we prove the main results of this paper.
Assume that $\lambda$ is a fixed real number. For a given initial degenerate sequence $a_{\lambda}(0,m)$ $(m=0,1,2,\dots)$, we define the doubly indexed degenerate sequence $a_{\lambda}(n,m),\ (n \ge 1,\,\, m \ge 0)$, which are given by
\begin{equation}
a_{\lambda}(n,m)=ma_{\lambda}(n-1,m)-(m+1)a_{\lambda}(n-1,m+1)-(n-1)\lambda a_{\lambda}(n-1,m).\label{12}
\end{equation}
For any nonnegative integer $n$, let $g_{n}(t,\lambda)$ be the generating function of $a_{\lambda}(n,m),\,\,(m \ge 0)$, given by
\begin{equation}
g_{n}(t,\lambda)=\sum_{m=0}^{\infty}a_{\lambda}(n,m)t^{m}. \label{13}
\end{equation}
From \eqref{12} and \eqref{13}, we note that
\begin{align}
&g_{n}(t,\lambda)=\sum_{m=0}^{\infty}a_{\lambda}(n,m)t^{m}\label{14} \\
&=\sum_{m=0}^{\infty}\Big\{ma_{\lambda}(n-1,m)-(m+1)a_{\lambda}(n-1,m+1)-(n-1)\lambda	a_{\lambda}(n-1,m)\Big\}t^{m}\nonumber\\
&=\sum_{m=1}^{\infty}ma_{\lambda}(n-1,m)t^{m}-\sum_{m=0}^{\infty}(m+1)a_{\lambda}(n-1,m+1)t^{m}-(n-1)\lambda\sum_{m=0}^{\infty}a_{\lambda}(n-1,m)t^{n}\nonumber \\
&=t\sum_{m=0}^{\infty}(m+1)a_{\lambda}(n-1,m+1)t^{m}-\sum_{m=0}^{\infty}(m+1)a_{\lambda}(n-1,m+1)t^{m}\nonumber\\
&\quad -(n-1)\lambda\sum_{m=0}^{\infty}a_{\lambda}(n-1,m)t^{m}\nonumber \\
&=(t-1)\sum_{m=0}^{\infty}(m+1)a_{\lambda}(n-1,m+1)t^{m}-(n-1)\lambda\sum_{m=0}^{\infty}a_{\lambda}(n-1,m)t^{m}\nonumber \\
&=(t-1)\sum_{m=1}^{\infty}ma_{\lambda}(n-1,m)t^{m-1}-(n-1)\lambda\sum_{m=0}^{\infty}a_{\lambda}(n-1,m)t^{m}\nonumber \\
&=(t-1)\frac{d}{dt}g_{n-1}(t,\lambda)-(n-1)\lambda g_{n-1}(t,\lambda) \nonumber \\
&=\Big((t-1)\frac{d}{dt}-(n-1)\lambda
\Big)g_{n-1}(t,\lambda)\nonumber.
\end{align}
Thus, by \eqref{14}, we get
\begin{equation}
\begin{aligned}
	g_{n}(t,\lambda)&=\Big((t-1)\frac{d}{dt}-(n-1)\lambda
\Big)g_{n-1}(t,\lambda)\\
& =\Big((t-1)\frac{d}{dt}-(n-1)\lambda
\Big)\Big((t-1)\frac{d}{dt}-(n-2)\lambda\Big)g_{n-2}(t,\lambda).
\end{aligned}\label{15}	
\end{equation}
Continuing this process, we have
\begin{equation}
g_{n}(t,\lambda)=\Big((t-1)\frac{d}{dt}\Big)_{n,\lambda}g_{0}(t,\lambda),\quad (n\in\mathbb{N}).\label{16}
\end{equation}
Here one has to observe that
\begin{align*}
\Big((t-1)\frac{d}{dt}-i\lambda \Big)\Big((t-1)\frac{d}{dt}-j\lambda \Big)=
\Big((t-1)\frac{d}{dt}-j\lambda \Big)\Big((t-1)\frac{d}{dt}-i\lambda \Big),
\end{align*}
for any distinct nonnegative integers $i, j$. Similar observations to this are needed for other results in below.
\begin{lemma}
For $k\in\mathbb{Z}$ with $k\ge 0$, we have
\begin{displaymath}
\Big((t-1)\frac{d}{dt}\Big)_{k,\lambda}=\sum_{m=0}^{k}S_{2,\lambda}(k,m)(t-1)^{m}\Big(\frac{d}{dt}\Big)^{m}.
\end{displaymath}
\end{lemma}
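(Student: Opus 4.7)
The plan is to reduce the identity to the scalar case of \eqref{3} by interpreting it as an identity of polynomials in a single operator, and then to handle the only non-trivial operator computation, which is identifying the ordinary falling factorial of $D:=(t-1)\frac{d}{dt}$.

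\textbf{Step 1: An operator Euler-type identity.} First I would show by induction on $k$ that
\begin{equation*}
(D)_{k} := D(D-1)(D-2)\cdots (D-k+1) = (t-1)^{k}\left(\frac{d}{dt}\right)^{k}.
\end{equation*}
The base case $k=1$ is immediate. For the inductive step, using the induction hypothesis, one computes
\begin{align*}
(D)_{k+1} &= (D-k)(t-1)^{k}\left(\tfrac{d}{dt}\right)^{k} \\
&= (t-1)\tfrac{d}{dt}\!\left[(t-1)^{k}\left(\tfrac{d}{dt}\right)^{k}\right] - k(t-1)^{k}\left(\tfrac{d}{dt}\right)^{k},
\end{align*}
and the Leibniz-rule expansion of the first term produces a $k(t-1)^{k}(d/dt)^{k}$ that cancels the subtracted piece, leaving $(t-1)^{k+1}(d/dt)^{k+1}$. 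This is just the familiar fact that $t\frac{d}{dt}$ has ordinary falling factorial $t^k(d/dt)^k$, translated from $t$ to $t-1$.

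\textbf{Step 2: Substitute $x\mapsto D$ in \eqref{3}.} Since the operators $D-i\lambda$ and $D-j\lambda$ are polynomials in the single operator $D$, they all commute, so $(D)_{k,\lambda}=D(D-\lambda)\cdots(D-(k-1)\lambda)$ is an unambiguous polynomial in $D$. Therefore every polynomial identity in a scalar variable $x$ remains valid when $x$ is replaced by $D$. Applying this to the defining identity \eqref{3}, namely $(x)_{k,\lambda}=\sum_{m=0}^{k}S_{2,\lambda}(k,m)(x)_{m}$, yields
\begin{equation*}
(D)_{k,\lambda}=\sum_{m=0}^{k}S_{2,\lambda}(k,m)(D)_{m}.
\end{equation*}
Combining with Step 1 gives the claimed formula.

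\textbf{Expected obstacle.} The only real work is the induction in Step 1; the rest is a formal substitution justified by commutativity. An alternative route would be a direct induction on $k$ using the recurrence $S_{2,\lambda}(k+1,m)=S_{2,\lambda}(k,m-1)+(m-k\lambda)S_{2,\lambda}(k,m)$ together with $\bigl((t-1)\tfrac{d}{dt}-k\lambda\bigr)(t-1)^{m}(d/dt)^{m}=(t-1)^{m+1}(d/dt)^{m+1}+(m-k\lambda)(t-1)^{m}(d/dt)^{m}$, but this essentially re-derives the scalar identity \eqref{3}, so the operator-substitution argument above is preferable.
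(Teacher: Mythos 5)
Your proof is correct, and it takes a genuinely different (though closely parallel) route from the paper's. The paper proves the lemma by testing both sides on a function expanded as $f(t)=\sum_{n}a_{n}(t-1)^{n}$: since $(t-1)\frac{d}{dt}$ acts as multiplication by $n$ on $(t-1)^{n}$, the left side produces $\sum_{n}a_{n}(n)_{k,\lambda}(t-1)^{n}$, the scalar identity \eqref{3} is then applied at $x=n$, and the resulting sums are recognized as $(t-1)^{m}\big(\frac{d}{dt}\big)^{m}f(t)$. You instead prove the operator identity $(D)_{m}=(t-1)^{m}\big(\frac{d}{dt}\big)^{m}$ for $D=(t-1)\frac{d}{dt}$ directly by induction and then substitute $D$ into \eqref{3} as a polynomial identity in one commuting variable; your inductive computation is the operator-level counterpart of the fact $(t-1)^{m}\big(\frac{d}{dt}\big)^{m}(t-1)^{n}=(n)_{m}(t-1)^{n}$ that the paper uses implicitly. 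What your version buys is that it is a clean identity of differential operators, valid on any function to which the operators apply, whereas the paper's argument tacitly requires expanding $f$ in powers of $(t-1)$ (the generating functions in question are given as series in $t$ about $0$, so this re-expansion is a small formal point the paper does not address). What the paper's version buys is brevity: the diagonal action on the basis $(t-1)^{n}$ makes the reduction to the scalar identity immediate, with no Leibniz-rule induction needed. Both arguments ultimately rest on the same two ingredients, and your substitution step is correctly justified by the observation that all the operators involved are polynomials in the single operator $D$ and hence commute.
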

\begin{proof}
Let $f(t)=\sum_{n=0}^{\infty}a_{n}(t-1)^{n}$.
Then we have
\begin{align}
&\Big((t-1)\frac{d}{dt}\Big)_{k,\lambda}f(t)=\sum_{n=0}^{\infty}a_{n}(n)_{k,\lambda}(t-1)^{n}\label{17}\\
&=\sum_{n=0}^{\infty}a_{n}\Big(\sum_{m=0}^{k}S_{2,\lambda}(k,m)(n)_{m}\Big)(t-1)^{n}=\sum_{m=0}^{k}S_{2,\lambda}(k,m)\sum_{n=0}^{\infty}a_{n}(n)_{m}(t-1)^{n}\nonumber \\
&=\Big(\sum_{m=0}^{k}S_{2,\lambda}(k,m)(t-1)^{m}\Big(\frac{d}{dt}\Big)^{m}\Big)f(t).\nonumber
\end{align}
By \eqref{17}, we get
\begin{equation}
\Big((t-1)\frac{d}{dt}\Big)_{k,\lambda}=\sum_{m=0}^{k}S_{2,\lambda}(k,m)(t-1)^{m}\Big(\frac{d}{dt}\Big)^{m}. \label{18}
\end{equation}
\end{proof}
 From \eqref{16} and Lemma 1. we note that
 \begin{equation}
 \begin{aligned}
 g_{n}(t,\lambda)&=\Big((t-1)\frac{d}{dt}\Big)_{n,\lambda}g_{0}(t,\lambda) \\
 &=\sum_{k=0}^{n}S_{2,\lambda}(n,k)(t-1)^{k}\Big(\frac{d}{dt}\Big)^{k}g_{0}(t,\lambda),
 \end{aligned}\label{19}
 \end{equation}
where $\displaystyle g_{n}(t,\lambda)=\sum_{m=0}^{\infty}a_{\lambda}(n,m)t^{m}\displaystyle$, \par
Let $t=0$ in \eqref{19}. Then we have
\begin{equation}
a_{\lambda}(n,0)=\sum_{k=0}^{n}S_{2,\lambda}(n,k)(-1)^{k}k!a_{\lambda}(0,k).\label{20}	
\end{equation}
Therefore, by \eqref{20}, we obtain the following theorem.
\begin{theorem}
For a given initial degenerate sequence $a_{\lambda}(0,m),\ (m=0,1,2,\dots)$, let the doubly indexed degenerate sequence $a_{\lambda}(n,m)$ be defined by the recurrence relation
\begin{displaymath}
a_{\lambda}(n,m)=ma_{\lambda}(n-1,m)-(m+1)a_{\lambda}(n-1,m+1)-(n-1)\lambda a_{\lambda}(n-1,m),
\end{displaymath}	
where $n$ is a positive integer. Then we have
\begin{displaymath}
a_{\lambda}(n,0)=\sum_{k=0}^{n}S_{2,\lambda}(n,k)(-1)^{k}k!a_{\lambda}(0,k).
\end{displaymath}
\end{theorem}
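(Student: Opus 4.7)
The plan is to convert the mixed two-index recurrence into a single differential-operator identity on a one-variable generating function, then iterate to obtain a degenerate falling factorial of operators, and finally read off the claimed identity at $t=0$.

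First I would introduce the $m$-generating function $g_{n}(t,\lambda)=\sum_{m\ge 0}a_{\lambda}(n,m)t^{m}$ for each fixed $n$. The three terms in the recurrence have natural meanings on the $t$-side: $ma_{\lambda}(n-1,m)$ assembles into $t\,\tfrac{d}{dt}g_{n-1}(t,\lambda)$, the term $(m+1)a_{\lambda}(n-1,m+1)$ assembles into $\tfrac{d}{dt}g_{n-1}(t,\lambda)$ after shifting the summation index, and the last term contributes $-(n-1)\lambda\,g_{n-1}(t,\lambda)$. Combining gives the operator identity
\begin{equation*}
g_{n}(t,\lambda)=\Bigl((t-1)\tfrac{d}{dt}-(n-1)\lambda\Bigr)g_{n-1}(t,\lambda).
\end{equation*}
This is exactly the kind of shift that in the limit $\lambda\to 0$ produces the classical Bernoulli-style operator.

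Next I would iterate this identity $n$ times. Since the operators $(t-1)\tfrac{d}{dt}-i\lambda$ for different integers $i$ commute with each other (they are all polynomials in the single operator $D=(t-1)\tfrac{d}{dt}$), the product is unambiguous and equals the degenerate falling factorial $\bigl((t-1)\tfrac{d}{dt}\bigr)_{n,\lambda}$ acting on $g_{0}(t,\lambda)$. At this point I would invoke Lemma 1 in the form
\begin{equation*}
\Bigl((t-1)\tfrac{d}{dt}\Bigr)_{n,\lambda}=\sum_{k=0}^{n}S_{2,\lambda}(n,k)(t-1)^{k}\Bigl(\tfrac{d}{dt}\Bigr)^{k},
\end{equation*}
which is just the degenerate Stirling inversion applied termwise after expanding $g_{0}$ in powers of $(t-1)$, and which the excerpt has already proved.

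Finally I would specialize to $t=0$. On the left I obtain $g_{n}(0,\lambda)=a_{\lambda}(n,0)$, while on the right $(t-1)^{k}$ becomes $(-1)^{k}$ and $\bigl(\tfrac{d}{dt}\bigr)^{k}g_{0}(t,\lambda)\big|_{t=0}=k!\,a_{\lambda}(0,k)$, directly yielding the claimed formula. I expect the only delicate point to be the commutativity of the factors $(t-1)\tfrac{d}{dt}-i\lambda$ for distinct $i$, which is needed to justify identifying the iterated composition with a genuine degenerate falling factorial of an operator and thereby to bring Lemma 1 to bear; everything else is index bookkeeping in the power series manipulations leading to the operator form of the recurrence.
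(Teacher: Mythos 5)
Your proposal is correct and follows essentially the same path as the paper: form the generating function $g_{n}(t,\lambda)$, convert the recurrence into the operator identity $g_{n}=\bigl((t-1)\tfrac{d}{dt}-(n-1)\lambda\bigr)g_{n-1}$, iterate (using commutativity of the factors) to get $\bigl((t-1)\tfrac{d}{dt}\bigr)_{n,\lambda}g_{0}$, apply Lemma 1, and evaluate at $t=0$. No gaps; the delicate point you flag (commutativity of the shifted operators) is exactly the observation the paper also records after equation \eqref{16}.
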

Let $a_{\lambda}(0,k)=\frac{1}{k+1}$ in Theorem 2. Then, from \eqref{11}, we see that
\begin{equation}
a_{\lambda}(n,0)=\sum_{k=0}^{n}S_{2,\lambda}(n,k)(-1)^{k}\frac{k!}{k+1}=\beta_{n,\lambda},\quad (n\ge 0).\label{21}
\end{equation}
For a given initial degenerate sequence $b_{\lambda}(0,m),\ (m=0,1,2,\dots)$, consider the doubly indexed degenerate sequence $b_{\lambda}(n,m),\ (n\ge 1,\,\, m \ge 0)$, which are defined by
\begin{equation}
b_{\lambda}(n,m)=(m+1)b_{\lambda}(n-1,m)-(m+1)b_{\lambda}(n-1,m+1)-(n-1)\lambda b_{\lambda}(n-1,m).\label{22}
\end{equation}
Let $g_{n}^{*}(t,\lambda)$ be the generating function of $b_{\lambda}(n,m),\,\,(m \ge 0)$, which is given by
\begin{equation}
g_{n}^{*}(t,\lambda)=\sum_{m=0}^{\infty}b_{\lambda}(n,m)t^{m}.\label{23}	
\end{equation}
Then, by \eqref{22} and \eqref{23}, we get
\begin{align}
&g_{n}^{*}(t,\lambda)=\sum_{m=0}^{\infty}b_{\lambda}(n,m)t^{m}\label{24}	\\
&=\sum_{m=0}^{\infty}mb_{\lambda}(n-1,m)t^{m}-\sum_{m=0}^{\infty}(m+1)b_{\lambda}(n-1,m+1)t^{m}+\big(1-(n-1)\lambda\big)\sum_{m=0}b_{\lambda}(n-1,m)t^{m}\nonumber\\
&=(t-1)\sum_{m=0}^{\infty}(m+1)b_{\lambda}(n-1,m+1)t^{m}+\big(1-(n-1)\lambda\big)\sum_{m=0}^{\infty}b_{\lambda}(n-1,m)t^{m}\nonumber \\
&=(t-1)\sum_{m=1}^{\infty}mb_{\lambda}(n-1,m)t^{m-1}+\big(1-(n-1)\lambda\big)\sum_{m=0}^{\infty}b_{\lambda}(n-1,m)t^{m}\nonumber \\
&=(t-1)\frac{d}{dt}g_{n-1}^{*}(t,\lambda)+\big(1-(n-1)\lambda\big)g_{n-1}^{*}(t,\lambda)=\Big((t-1)\frac{d}{dt}+1-(n-1)\lambda\Big)g_{n-1}^{*}(t,\lambda)\nonumber.
\end{align}
Thus, by \eqref{24}, we get
\begin{align*}
g_{n}^{*}(t,\lambda)&=\Big((t-1)\frac{d}{dt}+1-(n-1)\lambda\Big)g_{n-1}^{*}(t,\lambda)\\
&=\Big((t-1)\frac{d}{dt}+1-(n-1)\lambda\Big)\Big((t-1)\frac{d}{dt}+1-(n-2)\lambda\Big)g_{n-2}^{*}(t,\lambda).
\end{align*}
Continuing this process, we have
\begin{equation}
g_{n}^{*}(t,\lambda)=\Big(1+(t-1)\frac{d}{dt}\Big)_{n,\lambda}g_{0}^{*}(t,\lambda), 	\label{25}
\end{equation}
where $\displaystyle g_{n}^{*}(t,\lambda)=\sum_{m=0}^{\infty}b_{\lambda}(n,m)t^{m}\displaystyle$.
\par
From \eqref{18} and \eqref{25}, we note that
\begin{align}
g_{n}^{*}(t,\lambda)&=\sum_{k=0}^{n}\binom{n}{k}(1)_{n-k,\lambda}\Big((t-1)\frac{d}{dt}\Big)_{k,\lambda}g_{0}^{*}(t,\lambda)\label{26}\\
&=\sum_{k=0}^{n}\binom{n}{k}(1)_{n-k,\lambda}\sum_{m=0}^{k}S_{2,\lambda}(k,m)(t-1)^{m}\Big(\frac{d}{dt}\Big)^{m}g_{0}^{*}(t,\lambda)\nonumber \\
&=\sum_{m=0}^{n}(t-1)^{m}\Big(\frac{d}{dt}\Big)^{m}\sum_{k=m}^{n}\binom{n}{k}(1)_{n-k,\lambda}S_{2,\lambda}(k,m)g_{0}^{*}(t,\lambda).\nonumber
\end{align}
 By \eqref{7} and \eqref{26}, we get
 \begin{equation}
 g_{n}^{*}(t,\lambda)=\sum_{m=0}^{n}(t-1)^{m}{n+1 \brace m+1}_{\lambda}\bigg(\frac{d}{dt}\bigg)^{m}g_{0}^{*}(t,\lambda), \label{27}	
 \end{equation}
where $\displaystyle g_{n}^{*}(t,\lambda)=\sum_{k=0}^{\infty}b_{\lambda}(n,k)t^{k}\displaystyle$. \par
Let $t=0$ in \eqref{27}. Then we have
\begin{equation}
b_{\lambda}(n,0)=\sum_{m=0}^{n}(-1)^{m}{n+1 \brace m+1}_{\lambda}m!b_{\lambda}(0,m).\label{28}
\end{equation}
Therefore, by \eqref{28}, we obtain the following theorem.
\begin{theorem}
For a given initial degenerate sequence $b_{\lambda}(0,m),\ (m=0,1,2,\dots)$, let the doubly indexed degenerate sequence $b_{\lambda}(n,m),\ (n \ge 1,\,\,m \ge 0)$, be defined by the recurrence relation
\begin{displaymath}
b_{\lambda}(n,m)=(m+1)b_{\lambda}(n-1,m)-(m+1)b_{\lambda}(n-1,m+1)-(n-1)\lambda b_{\lambda}(n-1,m).
\end{displaymath}
Then we have
\begin{displaymath}
b_{\lambda}(n,0)=\sum_{m=0}^{n}(-1)^{m}{n+1 \brace m+1}_{\lambda}m!b_{\lambda}(0,m).
\end{displaymath}
\end{theorem}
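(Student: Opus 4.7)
The approach mirrors the proof of Theorem 2, working with the modified operator that arises from the recurrence \eqref{22} in place of \eqref{12}. First, I would form the generating function $g_{n}^{*}(t,\lambda) = \sum_{m\ge 0} b_{\lambda}(n,m) t^{m}$, multiply the recurrence \eqref{22} by $t^{m}$, and sum. The manipulations shown in \eqref{24}, together with the identifications $\sum_{m\ge 0}(m+1) b_{\lambda}(n-1,m+1) t^{m} = \tfrac{d}{dt} g_{n-1}^{*}(t,\lambda)$ and $\sum_{m\ge 1} m b_{\lambda}(n-1,m) t^{m} = t \tfrac{d}{dt} g_{n-1}^{*}(t,\lambda)$, yield the first-order operator identity $g_{n}^{*}(t,\lambda) = \bigl((t-1)\tfrac{d}{dt} + 1 - (n-1)\lambda\bigr) g_{n-1}^{*}(t,\lambda)$.

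Next, since the operators $(t-1)\tfrac{d}{dt} + 1 - j\lambda$ commute pairwise for distinct $j$ (by the same remark that justifies \eqref{16}), iterating gives the closed form $g_{n}^{*}(t,\lambda) = \bigl(1 + (t-1)\tfrac{d}{dt}\bigr)_{n,\lambda} g_{0}^{*}(t,\lambda)$ displayed in \eqref{25}. I would then expand this operator using the addition formula for degenerate falling factorials, $(x+y)_{n,\lambda} = \sum_{k=0}^{n}\binom{n}{k}(y)_{n-k,\lambda}(x)_{k,\lambda}$, with $x = (t-1)\tfrac{d}{dt}$ and $y = 1$, and apply Lemma 1 to each factor $\bigl((t-1)\tfrac{d}{dt}\bigr)_{k,\lambda}$. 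This produces the double sum \eqref{26}.

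The key combinatorial step is to recognize the inner coefficient $\sum_{k=m}^{n}\binom{n}{k}(1)_{n-k,\lambda} S_{2,\lambda}(k,m)$ as the degenerate $1$-Stirling number ${n+1 \brace m+1}_{\lambda}$ by specializing identity \eqref{7} to $r=1$. This gives \eqref{27}. Finally, setting $t=0$ and using $(t-1)^{m}|_{t=0} = (-1)^{m}$ together with $\bigl(\tfrac{d}{dt}\bigr)^{m} g_{0}^{*}(t,\lambda)\big|_{t=0} = m!\, b_{\lambda}(0,m)$, the claimed formula $b_{\lambda}(n,0) = \sum_{m=0}^{n}(-1)^{m}{n+1 \brace m+1}_{\lambda} m!\, b_{\lambda}(0,m)$ follows.

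The argument is not conceptually harder than that of Theorem 2; the main obstacle is simply bookkeeping. Specifically, one must be careful to apply the addition formula for $(x+y)_{n,\lambda}$ at the operator level (which is legitimate thanks to the commutativity observation) and then to collapse the resulting convolution of binomial coefficients, degenerate falling factorials of $1$, and degenerate Stirling numbers into a single degenerate $r$-Stirling number via \eqref{7}. Once this identification is made, the substitution $t=0$ is routine and finishes the proof.
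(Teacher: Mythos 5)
Your proposal is correct and follows essentially the same route as the paper: derive the one-step operator identity $g_{n}^{*}=\bigl((t-1)\frac{d}{dt}+1-(n-1)\lambda\bigr)g_{n-1}^{*}$ from the recurrence, iterate to get \eqref{25}, expand via the Vandermonde-type addition formula for degenerate falling factorials together with Lemma 1, collapse the inner sum to ${n+1 \brace m+1}_{\lambda}$ using \eqref{7} with $r=1$, and set $t=0$. This is exactly the paper's argument in \eqref{24}--\eqref{28}.
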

Let $b_{\lambda}(0,m)=\frac{1}{m+1}$ in Theorem 3. Then, by \eqref{11}, we have
\begin{align*}
b_{\lambda}(n,0)=\sum_{m=0}^{n}(-1)^{m}\frac{m!}{m+1}{n+1 \brace m+1}_{\lambda}=\beta_{n,\lambda}(1),\quad (n\ge 0).
\end{align*}
Let $r$ be a nonnegative integer. For a given initial degenerate sequence $a_{\lambda}^{(r)}(0,m),\ (m=0,1,2,\dots)$, consider the doubly indexed degenerate sequence $a_{\lambda}^{(r)}(n,m),\ (n\ge 1,\,\,m \ge 0)$, such that
\begin{equation}
a_{\lambda}^{(r)}(n,m)=(m+r)a_{\lambda}^{(r)}(n-1,m)-(m+1)a_{\lambda}^{(r)}(n-1,m+1)-(n-1)\lambda a_{\lambda}^{(r)}(n-1,m).\label{29}
\end{equation}
Let $g_{n}^{(r)}(t,\lambda)$ be the generating function of $a_{\lambda}^{(r)}(n,m),\,\,(m \ge 0)$, which is given by
\begin{equation}
g_{n}^{(r)}(t,\lambda)=\sum_{m=0}^{\infty}a_{\lambda}^{(r)}(n,m)t^{m}.\label{30}	
\end{equation}
By the same method as in \eqref{14} and \eqref{27}, we get
\begin{align}
g_{n}^{(r)}(t,\lambda)&=\Big(r+(t-1)\frac{d}{dt}\Big)_{n,\lambda}g_{0}^{(r)}(t,\lambda) \label{31} \\
&=\sum_{k=0}^{n}\binom{n}{k}(r)_{n-k,\lambda}\Big((t-1)\frac{d}{dt}\Big)_{k,\lambda}g^{(r)}(t,\lambda)\nonumber \\
&=\sum_{k=0}^{n}\binom{n}{k}(r)_{n-k,\lambda}\sum_{m=0}^{k}S_{2,\lambda}(k,m)(t-1)^{m}\Big(\frac{d}{dt}\Big)^{m}g_{0}^{(r)}(t,\lambda)\nonumber \\
&=\sum_{m=0}^{n}(t-1)^{m}\Big(\frac{d}{dt}\Big)^{m}\Big(\sum_{k=m}^{n}\binom{n}{k}(r)_{n-k,\lambda}S_{2,\lambda}(k,m)\Big)g_{0}^{(r)}(t,\lambda)\nonumber \\
&=\sum_{m=0}^{n}(t-1)^{m}{n+r \brace m+r}_{\lambda}\Big(\frac{d}{dt}\Big)^{m}g_{0}^{(r)}(t,\lambda),\nonumber
\end{align}
where $\displaystyle g_{n}^{(r)}(t,\lambda)=\sum_{k=0}^{n}a_{\lambda}^{(r)}(n,k)t^{k}\displaystyle$. \par
Let $t=0$ in \eqref{31}. Then we have
\begin{equation}
a_{\lambda}^{(r)}(n,0)=\sum_{m=0}^{n}{n+r \brace m+r}_{\lambda}(-1)^{m}m!a_{\lambda}^{(r)}(0,m).\label{32}
\end{equation}
Therefore, by \eqref{32}, we obtain the following theorem.
\begin{theorem}
Let $r$ be a nonnegative integer. For a given initial degenerate sequence $a_{\lambda}^{(r)}(0,m),\ (m=0,1,2,\dots)$, let the doubly indexed degenerate sequence $a_{\lambda}^{(r)}(n,m),\ (n\ge 1, \,\, m \ge 0)$, be defined by the recurrence relation
\begin{displaymath}
a_{\lambda}^{(r)}(n,m)=(m+r)a_{\lambda}^{(r)}(n-1,m)-(m+1)a_{\lambda}^{(r)}(n-1,m+1)-(n-1)\lambda a_{\lambda}^{(r)}(n-1,m).
\end{displaymath}
Then we have
\begin{displaymath}
a_{\lambda}^{(r)}(n,0)=\sum_{m=0}^{n}{n+r \brace m+r}_{\lambda}(-1)^{m}m!a_{\lambda}^{(r)}(0,m).
\end{displaymath}	
\end{theorem}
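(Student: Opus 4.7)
The plan is to imitate the proofs of Theorems 2 and 3, carrying the parameter $r$ through the computation. First I would introduce the generating function $g_{n}^{(r)}(t,\lambda)=\sum_{m\ge 0}a_{\lambda}^{(r)}(n,m)t^{m}$ and convert the recurrence \eqref{29} into an operator identity on $g_{n-1}^{(r)}$. Decomposing $m+r$ as $m+r$, the reindexing that pairs $\sum_{m\ge 1}m\,a_{\lambda}^{(r)}(n-1,m)t^{m}$ against $\sum_{m\ge 0}(m+1)a_{\lambda}^{(r)}(n-1,m+1)t^{m}$ produces a $(t-1)\frac{d}{dt}$, while the leftover $r$ and the $\lambda$-term contribute $r$ and $-(n-1)\lambda$ as scalar multipliers, exactly paralleling the derivation in \eqref{14}. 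This yields
\[
g_{n}^{(r)}(t,\lambda)=\Big((t-1)\frac{d}{dt}+r-(n-1)\lambda\Big)g_{n-1}^{(r)}(t,\lambda).
\]

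Next, iterating this relation and invoking the commutativity of the operators $(t-1)\frac{d}{dt}+r-i\lambda$ for distinct integers $i$ (the same observation made after \eqref{16}) gives
\[
g_{n}^{(r)}(t,\lambda)=\Big(r+(t-1)\frac{d}{dt}\Big)_{n,\lambda}g_{0}^{(r)}(t,\lambda).
\]
Because the two summands $r$ and $(t-1)\frac{d}{dt}$ inside the degenerate falling factorial commute, I would split this via the binomial-type expansion
\[
\Big(r+(t-1)\frac{d}{dt}\Big)_{n,\lambda}=\sum_{k=0}^{n}\binom{n}{k}(r)_{n-k,\lambda}\Big((t-1)\frac{d}{dt}\Big)_{k,\lambda},
\]
and then apply Lemma~1 to each inner operator.

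After interchanging the order of summation, the $(t-1)^{m}(d/dt)^{m}$ coefficient becomes $\sum_{k=m}^{n}\binom{n}{k}(r)_{n-k,\lambda}S_{2,\lambda}(k,m)$, which collapses by identity \eqref{7} to the degenerate $r$-Stirling number ${n+r \brace m+r}_{\lambda}$. This yields
\[
g_{n}^{(r)}(t,\lambda)=\sum_{m=0}^{n}{n+r \brace m+r}_{\lambda}(t-1)^{m}\Big(\frac{d}{dt}\Big)^{m}g_{0}^{(r)}(t,\lambda).
\]
Evaluating at $t=0$ turns $(t-1)^{m}$ into $(-1)^{m}$ and $(d/dt)^{m}g_{0}^{(r)}(0,\lambda)$ into $m!\,a_{\lambda}^{(r)}(0,m)$, producing the claimed closed form for $a_{\lambda}^{(r)}(n,0)$.

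The main obstacle is the careful bookkeeping in the first step: one must verify that the three terms of the recurrence regroup cleanly into the single operator $(t-1)\frac{d}{dt}+r-(n-1)\lambda$, and that the pairwise commutativity of the iterated factors is genuinely sufficient to justify applying the degenerate falling factorial binomial expansion at the operator level. Once these are in hand, the remainder of the argument is a direct generalization of the derivations leading to Theorems~2 and 3, with identity \eqref{7} doing the essential work of converting the double sum into the $r$-Stirling numbers ${n+r \brace m+r}_{\lambda}$.
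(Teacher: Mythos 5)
Your proposal is correct and follows essentially the same route as the paper: the paper derives $g_{n}^{(r)}(t,\lambda)=\big(r+(t-1)\frac{d}{dt}\big)_{n,\lambda}g_{0}^{(r)}(t,\lambda)$ by the same generating-function manipulation, expands via the same binomial-type identity $\sum_{k}\binom{n}{k}(r)_{n-k,\lambda}\big((t-1)\frac{d}{dt}\big)_{k,\lambda}$, applies Lemma 1 and identity \eqref{7} to obtain the coefficients ${n+r \brace m+r}_{\lambda}$, and sets $t=0$. The bookkeeping concerns you flag (the regrouping into a single operator and the pairwise commutativity of the factors) are handled in the paper exactly as you anticipate.
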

Let $a_{\lambda}^{(r)}(0,m)=\frac{1}{m+1}$. Then, by \eqref{11}, we get
\begin{displaymath}
a_{\lambda}^{(r)}(n,0)=\sum_{m=0}^{n}{n+r \brace m+r}_{\lambda}(-1)^{m}\frac{m!}{m+1}=\beta_{n,\lambda}(r),\quad (n\ge 0).
\end{displaymath}
For a given initial degenerate sequence $a_{\lambda}^{(r)}(0,m),\ (m=0,1,2,\dots)$, for the ease of notation we let
\begin{equation}
F(t)=g_{0}^{(r)}(t,\lambda)=\sum_{m=0}^{\infty}a_{\lambda}^{(r)}(0,m)t^{m}.\label{33}	
\end{equation}
From \eqref{6}, Theorem 4 and \eqref{33}, we note that
\begin{align}
\sum_{n=0}^{\infty}a_{\lambda}^{(r)}(n,0)\frac{t^{n}}{n!}&=\sum_{n=0}^{\infty}\bigg(\sum_{m=0}^{n}{n+r \brace m+r}_{\lambda}(-1)^{m}m!a_{\lambda}^{(r)}(0,m)\bigg)\frac{t^{m}}{m!}\label{34} \\
&=\sum_{m=0}^{\infty}(-1)^{m}m!	a_{\lambda}^{(r)}(0,m)\sum_{n=m}^{\infty}{n+r \brace m+r}_{\lambda}\frac{t^{n}}{n!}\nonumber \\
&=\sum_{m=0}^{\infty}(-1)^{m}m!	a_{\lambda}^{(r)}(0,m)e_{\lambda}^{r}(t)\frac{1}{m!}(e_{\lambda}(t)-1)^{m}\nonumber \\
&=e_{\lambda}^{r}(t)\sum_{m=0}^{\infty}a_{\lambda}^{(r)}(0,m)(1-e_{\lambda}(t))^{m}\nonumber \\
&=e_{\lambda}^{r}(t)F(1-e_{\lambda}(t))\nonumber.
\end{align}
Therefore, by \eqref{34}, we obtain the following theorem.
\begin{theorem}
Let $\displaystyle F(t)=g_{0}^{(r)}(t,\lambda)=\sum_{m=0}^{\infty}a_{\lambda}^{(r)}(0,m)t^{m}\displaystyle$. Then we have
\begin{displaymath}
e_{\lambda}^{r}(t)F(1-e_{\lambda}(t))=\sum_{n=0}^{\infty}a_{\lambda}^{(r)}(n,0)\frac{t^{n}}{n!},
\end{displaymath}	
where $r$ is a nonnegative integer.
\end{theorem}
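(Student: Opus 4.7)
The plan is to start from the explicit formula for $a_{\lambda}^{(r)}(n,0)$ provided by Theorem 4, namely
\[
a_{\lambda}^{(r)}(n,0)=\sum_{m=0}^{n}{n+r \brace m+r}_{\lambda}(-1)^{m}m!\,a_{\lambda}^{(r)}(0,m),
\]
multiply by $t^{n}/n!$, and sum over $n\ge 0$. The right-hand side of the asserted identity will then be recognized once the summation over $n$ and the inner summation over $m$ are interchanged.

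First I would write
\[
\sum_{n=0}^{\infty}a_{\lambda}^{(r)}(n,0)\frac{t^{n}}{n!}=\sum_{n=0}^{\infty}\Bigg(\sum_{m=0}^{n}{n+r \brace m+r}_{\lambda}(-1)^{m}m!\,a_{\lambda}^{(r)}(0,m)\Bigg)\frac{t^{n}}{n!},
\]
and then exchange the two sums to obtain
\[
\sum_{m=0}^{\infty}(-1)^{m}m!\,a_{\lambda}^{(r)}(0,m)\sum_{n=m}^{\infty}{n+r \brace m+r}_{\lambda}\frac{t^{n}}{n!}.
\]
This exchange is legitimate at the level of formal power series in $t$ since, for each fixed $n$, only finitely many values of $m$ contribute. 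Next, I would invoke the closed-form identity \eqref{6}, which gives the inner sum as $\tfrac{1}{m!}\,e_{\lambda}^{r}(t)(e_{\lambda}(t)-1)^{m}$. The factorials $m!$ cancel and the factor $(-1)^{m}$ combines with $(e_{\lambda}(t)-1)^{m}$ to produce $(1-e_{\lambda}(t))^{m}$.

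After this cancellation the expression becomes
\[
e_{\lambda}^{r}(t)\sum_{m=0}^{\infty}a_{\lambda}^{(r)}(0,m)\bigl(1-e_{\lambda}(t)\bigr)^{m},
\]
and recognizing the series on the right as $F\bigl(1-e_{\lambda}(t)\bigr)$, by the definition $F(t)=\sum_{m\ge 0}a_{\lambda}^{(r)}(0,m)t^{m}$ in \eqref{33}, finishes the argument. The only delicate point worth noting is convergence of $F(1-e_{\lambda}(t))$; however, since $1-e_{\lambda}(t)$ has zero constant term in $t$, the substitution is well defined in $\mathbb{R}[\![t]\!]$, so the whole manipulation is a legitimate formal power series identity. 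Thus no genuine obstacle arises, and the proof essentially consists in reading \eqref{6} backwards inside the double sum obtained from Theorem 4.
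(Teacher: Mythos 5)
Your proposal is correct and follows essentially the same route as the paper's own derivation in \eqref{34}: sum the formula of Theorem 4 against $t^{n}/n!$, interchange the order of summation, and apply \eqref{6} to collapse the inner sum into $e_{\lambda}^{r}(t)\frac{1}{m!}(e_{\lambda}(t)-1)^{m}$, after which the series is recognized as $e_{\lambda}^{r}(t)F(1-e_{\lambda}(t))$. Your added remark on the formal-power-series legitimacy of the interchange and of the substitution $t\mapsto 1-e_{\lambda}(t)$ is a welcome precision the paper leaves implicit.
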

Let $a_{\lambda}^{(r)}(0,m)=\big(\frac{1}{2}\big)^{m}$ in Theorem 5. Then we have
\begin{align}
\sum_{n=0}^{\infty}a_{\lambda}^{(r)}(n,0)\frac{t^{n}}{n!}&=e_{\lambda}^{r}(t)\sum_{m=0}^{\infty}a_{\lambda}(0,m)(1-e_{\lambda}(t))^{m}\label{35} \\
&=e_{\lambda}^{r}(t)\sum_{m=0}^{\infty}\bigg(-\frac{1}{2}\bigg)^{m}(e_{\lambda}(t)-1)^{m}\nonumber \\
&=e_{\lambda}^{r}(t)\frac{1}{1+\frac{1}{2}(e_{\lambda}(t)-1)}=\frac{2}{e_{\lambda}(t)+1}e_{\lambda}^{r}(t)\nonumber\\
&=\sum_{n=0}^{\infty}\mathcal{E}_{n,\lambda}(r)\frac{t^{n}}{n!}.\nonumber
\end{align}
Comparing the coefficients on both sides of \eqref{35}, we have
\begin{displaymath}
	a_{\lambda}^{(r)}(n,0)=\mathcal{E}_{n,\lambda}(r),\quad (n\ge 0),
\end{displaymath}
where $\mathcal{E}_{n,\lambda}(x)$ are the degenerate Euler polynomials.

\section{Conclusion}
In recent years, explorations for degerate versions of some special numbers and polynomials regained the interests of many mathematicians which began with Carlitz's work on the degenerate Bernoulli and Euler numbers. They have been done by using various tools like combinatorial methods, generating functions, differential equations, umbral calculus techniques, $p$-adic analysis, special functions, operator theory, probability theory, and analytic number theory. \par
For a given initial degenerate sequence $a_{\lambda}^{(r)}(0,m),\ (m=0,1,2,\dots)$, and the doubly indexed degenerate sequence $a_{\lambda}^{(r)}(n,m),\ (n\ge 1, \,\, m \ge 0)$, defined by the recurrence relation
\begin{displaymath}
a_{\lambda}^{(r)}(n,m)=(m+r)a_{\lambda}^{(r)}(n-1,m)-(m+1)a_{\lambda}^{(r)}(n-1,m+1)-(n-1)\lambda a_{\lambda}^{(r)}(n-1,m),
\end{displaymath}
it was shown by making use of a differential operator that
\begin{displaymath}
a_{\lambda}^{(r)}(n,0)=\sum_{m=0}^{n}{n+r \brace m+r}_{\lambda}(-1)^{m}m!a_{\lambda}^{(r)}(0,m).
\end{displaymath}	
Then we noted that
$a_{\lambda}^{(r)}(n,0)=\beta_{n,\lambda}(r)$, for $a_{\lambda}^{(r)}(0,m)=\frac{1}{m+1}$, and $a_{\lambda}^{(r)}(n,0)=\mathcal{E}_{n,\lambda}(r)$, for $a_{\lambda}^{(r)}(0,m)=\big(\frac{1}{2}\big)^{m}$. \par
It is one of our future projects to continue to explore many degenerate special numbers and polynomials with the help of aforementioned tools.


\begin{thebibliography}{9}
\bibitem{1}
  Araci, S. \emph{A new class of Bernoulli polynomials attached to polyexponential functions and related identities.} Adv. Stud. Contemp. Math. (Kyungshang) \textbf{31} (2021), no. 2, 195-204.
\bibitem{2}
Carlitz, L. \emph{Degenerate Stirling, Bernoulli and Eulerian numbers.} Utilitas Math. \textbf{15} (1979), 51-88.
\bibitem{3}
Comtet, L. \emph{Advanced combinatorics.} The art of finite and infinite expansions. Revised and enlarged edition. D. Reidel Publishing Co., Dordrecht, 1974. xi+343 pp.
\bibitem{4}
Kilar, N.; Simsek, Y. \emph{Identities and relations for Fubini type numbers and polynomials via generating functions and $p$-adic integral approach.} Publ. Inst. Math. (Beograd) (N.S.) \textbf{106} (120) (2019), 113-123.
\bibitem{5}
Kim, D. S.; Kim, T. \emph{A note on a new type of degenerate Bernoulli numbers.} Russ. J. Math. Phys. \textbf{27} (2020), no. 2, 227-235.
\bibitem{6}
Kim, H. K.; Khan, W. A. \emph{Some identities of a new type of degenerate poly-Frobenius-Euler polynomials and numbers.} Proc. Jangjeon Math. Soc. \textbf{24} (2021), no. 1, 33-45.
\bibitem{7}
Kim, H. K.; Lee, D. S. \emph{Some identities of degenerate $r$-extended Lah-Bell polynomials.} Proc. Jangjeon Math. Soc. \textbf{24} (2021), no. 1, 47-61.
\bibitem{8}
Kim, T.; Kim, D. S. \emph{On some degenerate differential and degenerate difference operator}, Russ. J. Math. \textbf{20} (2022), no. 1, 37-47.
\bibitem{9}
Kim, T.; Kim, D. S. \emph{Some formulas for fully degenerate Bernoulli numbers and polynomials.} arXiv:2202.04819, 2022.
\bibitem{10}
 Kim, T.; Kim, D. S.; Jang, G.-W. \emph{A note on degenerate Fubini polynomials.} Proc. Jangjeon Math. Soc. \textbf{20} (2017), no. 4, 521–531.
\bibitem{11}
Kim, T.; Kim, D. S.; Lee, H.; Park, J.-W. \emph{A note on degenerate $r$-Stirling numbers.} J. Inequal. Appl. 2020, Paper No. 225, 12 pp.
\bibitem{12}
Kim, T.; Yao, Y.; Kim, D. S.; Jang, G.-W. \emph{Degenerate $r$-Stirling numbers and $r$-Bell polynomials.} Russ. J. Math. Phys. \textbf{25} (2018), no. 1, 44-58.
\bibitem{13}
Park, J.-W.; Kim, B. M.; Kwon, J. \emph{Some identities of the degenerate Bernoulli polynomials of the second kind arising from $\lambda$-Sheffer sequences.} Proc. Jangjeon Math. Soc. \textbf{24} (2021), no. 3, 323-342.
\bibitem{14}
Roman, S. \emph{The umbral calculus.} Pure and Applied Mathematics, 111. Academic Press, Inc. [Harcourt Brace Jovanovich, Publishers], New York, 1984. x+193 pp.
\bibitem{15}
Sebaoui, M.; Laissaoui, D.; Guettai, G.; Rahmani, M. \emph{Generalized Fubini transform
with two variable.} arXiv:2202.11166, 2022.
\bibitem{16}
 Sharma, S. K.; Khan, W. A.; Araci, S.; Ahmed, S. S. \emph{New construction of type 2 degenerate central Fubini polynomials with their certain properties.} Adv. Difference Equ. 2020, Paper No. 587, 11 pp.
\bibitem{17}
Simsek, Y. \emph{Construction of generalized Leibnitz type numbers and their properties.} Adv. Stud. Contemp. Math. (Kyungshang) \textbf{31} (2021), no. 3, 311-323.
\end{thebibliography}
\end{document}